\newtheorem{lemma}{Lemma}
\newtheorem{theorem}{Theorem}
\newtheorem{remark}{Remark}
\def\Q{\mathbb{Q}}
\def\eps{\varepsilon}
\DeclareMathOperator{\Cl}{Cl}
\def\errorexp{1-\frac{2}{n+1}}
\title{An improved error term for counting $D_4$-quartic fields}
\author{Kevin J.~McGown \and Amanda Tucker}
\begin{document}

\maketitle

\begin{abstract}
We prove that the number of quartic fields $K$ with
discriminant $|\Delta_K|\leq X$ whose Galois closure is $D_4$  equals $CX+O(X^{5/8+\eps})$,
improving the error term in a well-known result of Cohen, Diaz y Diaz, and Olivier.
We prove an analogous result for counting quartic dihedral extensions over an arbitrary base field.
\end{abstract}

\section{Introduction}\label{S:intro}

Let $N_n(G,X)$ be the number of degree $n$ number fields with Galois closure $G$ and discriminant
$|\Delta_K|\leq X$.  It is an interesting problem to find asymptotic expressions for $N_n(G,X)$.
This is the subject of conjectures of Malle and Bhargava.
Among other things, this is connected to the inverse Galois problem and the Cohen--Lenstra heuristics for class groups.
See, for example, the papers~\cite{bhargava.mass, cohen.lenstra, malle1, malle2, kluners}.

In this investigation, we will focus on quartic fields whose Galois closure is $D_4$, the symmetry group of a square. 
Our first result is as follows:

\begin{theorem}\label{T:1}
We have
$$
N_4(D_4,X)=CX+O(X^{5/8+\eps})
$$
where
$$
C=
\frac{1}{2}
\sum_{\substack{[k:\Q]=2}}
\frac{1}{2^{r_2(k)}\Delta_k^2}
\frac{\zeta_k^*(1)}{\zeta_k(2)}
\,.
$$
\end{theorem}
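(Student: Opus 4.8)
The plan is to set up a discriminant-preserving bijection and reduce the count to a class field theory problem over quadratic base fields. The starting point is the standard structural fact that a quartic field $K$ with Galois closure $D_4$ contains a \emph{unique} quadratic subfield $k$ (it corresponds to the unique index-$2$ subgroup of $D_4$ containing the non-normal order-$2$ point stabilizer), and that $K/k$ is quadratic with $K/\Q$ \emph{not} Galois; were $K/\Q$ Galois the closure would be $C_4$ or $V_4$. Conversely, every non-normal quadratic extension $K/k$ of a quadratic field $k$ has Galois closure of order $8$, hence $D_4$, and uniqueness of $k$ guarantees no $K$ is counted twice. Thus
$$
N_4(D_4,X)=\sum_{[k:\Q]=2}\#\{\,K/k \text{ quadratic, non-normal over }\Q:\ |\Delta_K|\le X\,\}.
$$

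Next I would invoke the tower formula $|\Delta_K|=\Delta_k^{2}\,N_{k/\Q}(\mathfrak{d}_{K/k})$, which converts $|\Delta_K|\le X$ into $N_{k/\Q}(\mathfrak{d}_{K/k})\le X/\Delta_k^{2}$ and already explains the weight $\Delta_k^{-2}$ in $C$. For fixed $k$, quadratic extensions correspond bijectively to nontrivial quadratic characters $\chi$ of the idele class group of $k$, with $N(\mathfrak{d}_{K/k})=N(\mathfrak{f}(\chi))$, so the inner count is governed by
$$
\Phi_k(s)=\sum_{\chi^2=1,\ \chi\ne 1}\frac{1}{N(\mathfrak{f}(\chi))^{s}}.
$$
I would evaluate $\Phi_k(s)$ as an Euler product with explicit local factors (squarefree conductors away from $2$, corrected factors at primes above $2$, and a $2^{-r_2(k)}$-type contribution from the archimedean places and the $2$-rank of the ray class group), obtaining a continuation near $s=1$ of the shape $\zeta_k(s)/\zeta_k(2s)$ times a holomorphic correction, with a simple pole at $s=1$ of residue $c_k=\tfrac12\,2^{-r_2(k)}\,\zeta_k^{*}(1)/\zeta_k(2)$. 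The normal ($C_4$ or $V_4$) extensions form a sparse subfamily whose generating series converges past $s=1$, so they contribute only to the error.

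A Perron/Tauberian argument applied to $\Phi_k(s)$ then gives, for each $k$,
$$
\#\{K/k:\ N(\mathfrak{d}_{K/k})\le Y\}=c_k\,Y+O\!\left(\Delta_k^{\theta}\,Y^{\phi+\eps}\right),\qquad Y=\frac{X}{\Delta_k^{2}},
$$
with power saving $\phi<1$ coming from the $\zeta_k(2s)^{-1}$ factor (squarefree sieving), and $\Delta_k^{\theta}$ recording the conductor dependence of the ideal count and of the analytic estimates. Summing the main term over all quadratic $k$ gives exactly $CX$; the series $\sum_k \Delta_k^{-2}$ converges because there are $O(T)$ quadratic fields with $\Delta_k\le T$. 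Since extensions exist only for $\Delta_k\le X^{1/2}$, the total error is the finite sum $\sum_{\Delta_k\le X^{1/2}}\Delta_k^{\theta}(X/\Delta_k^{2})^{\phi+\eps}$, which I would balance to obtain $O(X^{5/8+\eps})$.

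I expect the crux—and the source of the improved exponent—to be the \textbf{uniform power-saving error in the inner count together with the optimization of the $k$-sum}: one must track the dependence on $\Delta_k$ honestly, since the implied constants in both the ideal-counting error over $k$ and the squarefree sieve grow with $\Delta_k$, so that the trade-off between the tail in $Y$ and the growth in $\Delta_k$ lands at exponent $5/8$, improving on the exponent in the result of Cohen, Diaz y Diaz, and Olivier. The delicate bookkeeping at the prime $2$ and the archimedean places in pinning down the residue of $\Phi_k$ is a secondary technical hurdle.
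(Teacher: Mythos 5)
Your reduction to counting quadratic extensions of quadratic fields is essentially the paper's route (the paper quotes Cohen--Diaz y Diaz--Olivier for this group-theoretic step), but your multiplicity bookkeeping contains two errors that happen to cancel. Each $D_4$-quartic $K$ does have a unique quadratic subfield $k$, but inside the Galois closure there are \emph{two} conjugate quartic subfields over $k$, cut out by two \emph{distinct} quadratic Hecke characters of $k$; so the character sum $\Phi_k(s)$ you define hits each isomorphism class of $D_4$-quartics twice, and ``uniqueness of $k$ guarantees no $K$ is counted twice'' is not a valid justification. Correspondingly, the residue of your $\Phi_k(s)$ at $s=1$ is $2^{-r_2(k)}\zeta_k^*(1)/\zeta_k(2)$ (this is exactly the main term of Theorem~\ref{T:relative}), not half of it; you land on the correct constant $C$ only because the spurious $\tfrac12$ you inserted into $c_k$ cancels the global factor $\tfrac12$ you omitted. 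This is fixable, but as written the argument for the constant is wrong for the right answer.

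The genuine gap is in the error analysis, which is where the theorem actually lives. You leave $\theta$ and $\phi$ unspecified and propose to ``balance'' the single sum $\sum_{|\Delta_k|\le X^{1/2}}\Delta_k^{\theta}(X/\Delta_k^2)^{\phi+\eps}$; but there is nothing to balance in one sum, and with the exponents this method actually delivers for quadratic $k$ --- the paper proves $\theta=1/3+\eps$ and $\phi=1/2$, and obtaining this \emph{uniformly in $\Delta_k$} is the real technical input, requiring the uniform Landau-type estimates of Lowry-Duda, Taniguchi, and Thorne for Hecke $L$-functions rather than a routine Perron/Tauberian argument --- your single sum evaluates to $X^{1/2+\eps}\sum_{|\Delta_k|\le\sqrt{X}}\Delta_k^{-2/3}\asymp X^{2/3+\eps}$, which falls short of $5/8$. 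The missing idea is a two-regime splitting: use the refined estimate only for $|\Delta_k|\le Z$, use the \emph{trivial} bound $N_k(Y)\ll|\Delta_k|^{\eps}Y$ for $Z<|\Delta_k|\le\sqrt{X}$ (which your proposal never mentions), obtain $X^{1/2+\eps}Z^{1/3}+XZ^{-1+\eps}$, and then balance these two terms at $Z=X^{3/8}$. Without this splitting, and without actually establishing the values of $\theta$ and $\phi$, the proposal does not reach the stated exponent $5/8$.
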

Here $\zeta_k(s)$ denotes the Dedekind zeta function of $k$ and $\zeta^*(1)$ denotes the
first non-vanishing Laurent coefficient of $\zeta(s)$ at $s=1$.  As is customary,
$r_1(k)$ and $2r_2(k)$ denote the number of real and complex embeddings, respectively. 

Previously, Cohen, Diaz y Diaz, and Olivier (see~\cite{cohen.diaz.olivier}) proved that
$N_4(D_4,X)=CX+O(X^{3/4+\eps})$.
In later numerical work (see~\cite{cohen.diaz.olivier2}), they suggest that it is ``reasonable to conjecture'' 
that the error is $O(X^{1/2+\eps})$, and that there may be a secondary term.
Theorem~\ref{T:1} strengthens the error term in their result.
Our proof follows their approach and appeals to some of the calculations in Section 3 of~\cite{cohen.diaz.olivier},
but we do not  make explicit use of the same Dirichlet series.
See Remark~\ref{R:1} in~\S\ref{S:proof} for additional comments on their conjecture in the context of our proof.

In the course of proving Theorem~\ref{T:1}, we establish a result for counting relative
quadratic extensions for which the implicit constant in the error term only depends
on the degree of the base field.  The original asymptotic for this
counting problem was given in~\cite{wright}.

\begin{theorem}\label{T:relative}
Let $k$ be a number field of degree $n\geq 2$.  We have
\begin{align*}
&
\sum_{\substack{[K:k]=2\\N(\Delta_{K/k})\leq X}}1=
\frac{1}{2^{r_2(k)}}
\frac{\zeta_k^*(1)}{\zeta_k(2)}X
\;+
\\
&
|\Cl(k)[2]|
\cdot
\begin{cases}
O\left(
|\Delta_k|^{1/3}\log|\Delta_k|\, X^{1/2}\log X
\right)
&
n=2\,,
\\
O\left(
|\Delta_k|^{1/4}(\log|\Delta_k|)^2\, X^{1/2}(\log X)^3
\right)
&
n=3\,,
\\
O_n\left(
|\Delta_k|^{\frac{1}{n+1}}X^{\errorexp}(\log X)^{n-1}
\right)
&
n>3\,.
\end{cases}
\end{align*}
\end{theorem}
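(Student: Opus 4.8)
The plan is to follow the Kummer-theoretic/class-field-theoretic parametrization used by Cohen, Diaz y Diaz, and Olivier, but to replace their manipulation of a specific Dirichlet series by a direct sieve combined with an explicit, conductor-uniform lattice-point count. Since $\mu_2\subset k$, quadratic extensions $K/k$ correspond to nontrivial classes in $k^*/(k^*)^2$, and by the conductor--discriminant formula $N(\Delta_{K/k})$ equals the norm of the conductor of the associated quadratic character. First I would separate the conductor into its part above $2$ and its odd part: the part above $2$ ranges over a finite set determined by the primes dividing $2$, while the odd part is a squarefree ideal $\mathfrak a$ of $\Ok$ coprime to $2$; the archimedean places and the primes above $2$ contribute only bounded local factors, responsible after normalization for the $2^{-r_2(k)}$. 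The upshot of this bookkeeping --- which is exactly the content of Section~3 of \cite{cohen.diaz.olivier} --- is that counting the fields $K$ reduces to counting squarefree ideals $\mathfrak a$ lying in a prescribed coset of $\Cl(k)^2$ with $N\mathfrak a\le X$.

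Second, I would detect the class-group condition by orthogonality over the characters of $\Cl(k)/\Cl(k)^2\cong\Cl(k)[2]$ (more precisely over the relevant ray class group modulo a divisor of $2\infty$, whose characters have conductor $\ll_n|\Delta_k|$). The trivial character produces the main term; since the density of squarefree ideals is $\zeta_k^*(1)/\zeta_k(2)$, this yields the stated coefficient $\frac{1}{2^{r_2(k)}}\frac{\zeta_k^*(1)}{\zeta_k(2)}$. The nontrivial characters contribute no main term, and there are at most $|\Cl(k)[2]|$ of them; bounding the total error by the sum of their contributions is what produces the overall factor $|\Cl(k)[2]|$. Everything thus comes down to estimating the twisted squarefree sums $\sum_{N\mathfrak a\le X,\ \mathfrak a\text{ sqfree}}\chi(\mathfrak a)$ uniformly in the conductor of $\chi$.

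Third --- and this is the analytic heart --- I would expand the squarefree indicator as $\sum_{\mathfrak b^2\mid\mathfrak a}\mu(\mathfrak b)$ and reduce to the character-twisted ideal-counting function. The key input is an explicit Landau-type estimate: for a ray class character $\chi$ of conductor $\ll_n|\Delta_k|$,
$$\sum_{N\mathfrak c\le t}\chi(\mathfrak c)=\delta_\chi\,\rho_k\,t+O_n\!\big(|\Delta_k|^{1/(n+1)}\,t^{\errorexp}\,(\log t)^{n-1}\big),$$
where $\delta_\chi$ is $1$ or $0$ according as $\chi$ is trivial or not and $\rho_k=\zeta_k^*(1)$. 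This is obtained by Perron's formula applied to $\zeta_k(s)$ or $L(s,\chi)$, shifting the contour and invoking the convexity bound from the functional equation, whose analytic conductor is $\asymp|\Delta_k|(|t|+1)^n$; optimizing the height of the contour gives precisely the exponent $\errorexp$ on $t$ and $\tfrac{1}{n+1}$ on $|\Delta_k|$.

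Finally, I would insert this estimate into the sieve with a cutoff, writing the $\mathfrak b$-sum as an asymptotic range $N\mathfrak b\le Y$ plus a trivially bounded range $Y<N\mathfrak b\le\sqrt X$. The main-term tail and the large-$\mathfrak b$ range contribute $\ll X/Y$, while the Landau error contributes $\ll|\Delta_k|^{1/(n+1)}X^{\errorexp}\sum_{N\mathfrak b\le Y}N\mathfrak b^{-2(\frac{n-1}{n+1})}$. The exponent $2(n-1)/(n+1)$ exceeds, equals, or is less than $1$ exactly when $n>3$, $n=3$, or $n=2$, and this trichotomy is the source of the three cases: for $n>3$ the sum converges and the Landau error already dominates $X^{1/2}$, giving $|\Delta_k|^{1/(n+1)}X^{\errorexp}$; for $n=2$ one balances $X/Y$ against $|\Delta_k|^{1/3}X^{1/3}Y^{1/3}$ at $Y=X^{1/2}$ to obtain $|\Delta_k|^{1/3}X^{1/2}$; and $n=3$ is the logarithmically divergent borderline. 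The main obstacle is the third step: making the Landau/character-sum estimate completely explicit and uniform in the conductor while keeping the clean exponent $\tfrac{1}{n+1}$ on $|\Delta_k|$ and controlling the logarithmic factors. Once this conductor-uniform count is in hand, the sieve and the class-group orthogonality are routine.
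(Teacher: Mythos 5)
Your outline reproduces the paper's architecture essentially step for step: the Cohen--Diaz y Diaz--Olivier parametrization reducing the count to squarefree ideals lying in a prescribed coset of the square of a (ray) class group, orthogonality over the order-dividing-two characters (the principal one yielding the main term with the squarefree density $\zeta_k^*(1)/\zeta_k(2)$, the nonprincipal ones contributing the factor $|\Cl(k)[2]|$), the M\"obius expansion of the squarefree indicator, a conductor-uniform Landau-type character-sum estimate, and the trichotomy in $n$ governed by the convergence of $\sum_{\mathfrak b} N(\mathfrak b)^{-2(n-1)/(n+1)}$. The only bookkeeping slip is that the characters actually live on ray class groups modulo $\mathfrak d^2$ with $\mathfrak d\mid 2$, so their number is $|\Cl_{\mathfrak d^2}(k)[2]|\leq |\Cl(k)[2]|\,N(\mathfrak d)^2$ rather than $|\Cl(k)[2]|$; this costs only an $O_n(1)$ factor.

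The genuine gap is in what you call the analytic heart. A direct application of Perron's formula with the convexity bound does \emph{not} produce the exponent pair $\bigl(\frac{1}{n+1},\,1-\frac{2}{n+1}\bigr)$ on $(N(\mathfrak f)|\Delta_k|,\,t)$. Truncating Perron at height $T$ costs $t^{1+\eps}/T$, while the shifted contour at $\Re s=\sigma$ costs about $t^{\sigma}\bigl(N(\mathfrak f)|\Delta_k|\bigr)^{(1-\sigma)/2}T^{n(1-\sigma)/2+\eps}$; optimizing over $T$ and over $0\leq\sigma\leq 1$ yields at best an error of order $t^{1-\frac{2}{n+2}}\bigl(N(\mathfrak f)|\Delta_k|\bigr)^{\frac{1}{n+2}}$. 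For $n\geq 3$ this is strictly weaker in the $t$-aspect than Landau's $t^{1-\frac{2}{n+1}}$ and, after the squarefree sieve, fails to reach the stated $X^{1/2}$ (for $n=3$) and $X^{1-\frac{2}{n+1}}$ (for $n>3$) error terms. The exponent $1-\frac{2}{n+1}$ comes from Landau's finite-differencing/Riesz-mean method, not from contour truncation plus convexity, and making that method uniform in $|\Delta_k|$ and in the conductor of $\chi$ is precisely the nontrivial input the paper imports from Lowry-Duda, Taniguchi, and Thorne (their Theorems 3 and 5). You correctly flag this uniform estimate as the main obstacle, but the mechanism you propose for it would not close the gap; with that estimate supplied (by citation or by reworking Landau's method), the rest of your argument goes through exactly as in the paper.
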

In the above, $\Cl(k)[2]$ denotes the $2$-torsion subgroup of the class group of $k$.

The referee pointed out that our methods also prove a similar result for
$D_4$-quartic extensions over an arbitrary base field, and therefore
we also include the following:

\begin{theorem}\label{T:3}
Let $F$ be a number field of degree $n\geq 2$.
The number of quartic extensions $K/F$ with $N(\Delta_{K/F})\leq X$
whose Galois closure is $D_4$ equals
$$
\left(\frac{1}{2}\sum_{[k:F]=2}
\frac{1}{2^{r_2(k)}N(\Delta_{k/F})^2}\frac{\zeta^*_k(1)}{\zeta_k(2)}
\right)
X
+
O_n\left(
|\Cl(F)[2]|^3\cdot|\Delta_F|^{\frac{2}{2n+1}+\eps}X^{1-\frac{2}{2n+1}+\eps}
\right)
\,.
$$
\end{theorem}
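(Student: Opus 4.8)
The plan is to reduce to Theorem~\ref{T:relative} by exploiting the subgroup structure of $D_4$. The first step is to record the standard dictionary: a quartic extension $K/F$ has Galois closure $D_4$ if and only if it is non-Galois over $F$ and contains a (then necessarily unique) intermediate quadratic field $k$, $F\subset k\subset K$, with $[K:k]=2$; this is because the point stabilizer in $D_4$ is a non-central subgroup of order $2$, which lies in a unique subgroup of order $4$. Hence $D_4$-quartics over $F$ are parametrized by pairs $(k,K)$ with $k/F$ and $K/k$ quadratic and $\sigma K\neq K$, where $\sigma$ generates $\mathrm{Gal}(k/F)$. The two conjugate subfields $K$ and $\sigma K$ of $\overline{F}$ give isomorphic quartic fields with equal relative discriminant norm, and this is precisely what produces the factor $\tfrac12$ in the main term.

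Next I would use the tower formula $N(\Delta_{K/F})=N(\Delta_{K/k})\,N(\Delta_{k/F})^2$ together with $|\Delta_k|=N(\Delta_{k/F})\,|\Delta_F|^2$, so that for a fixed $k$ the bound $N(\Delta_{K/F})\le X$ becomes $N(\Delta_{K/k})\le Y_k:=X/N(\Delta_{k/F})^2$. Applying Theorem~\ref{T:relative} with base field $k$ (of degree $2n\ge 4>3$, so the third case governs) counts the quadratic $K/k$; after subtracting the sparse $K$ with $\sigma K=K$ (these give the $V_4$- and $C_4$-quartics) and summing over all $k$, the main terms $\tfrac{1}{2^{r_2(k)}}\tfrac{\zeta_k^*(1)}{\zeta_k(2)}Y_k$ assemble into the asserted leading constant times $X$. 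Since the summand decays like $N(\Delta_{k/F})^{-2}$ up to $|\Delta_k|^{\eps}$ factors, the series converges and extending the effective range $N(\Delta_{k/F})\lesssim X^{1/2}$ to an infinite sum costs only $O(|\Delta_F|^{\eps}X^{1/2+\eps})$.

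The heart of the argument is the accumulated error $\sum_k E_k$ with $E_k=|\Cl(k)[2]|\cdot O_n\!\big(|\Delta_k|^{1/(2n+1)}Y_k^{1-2/(2n+1)}(\log Y_k)^{2n-1}\big)$. Substituting the discriminant relations, the factor $|\Delta_F|^{2/(2n+1)}X^{1-2/(2n+1)}$ pulls out and one is left with $\sum_k|\Cl(k)[2]|\,N(\Delta_{k/F})^{-(4n-3)/(2n+1)}$, whose exponent is $\ge 1$ precisely for $n\ge 2$ (with equality at $n=2$). Here I would invoke genus theory for $k/F$---through the ambiguous class number formula and the norm map $\Cl(k)\to\Cl(F)$---to obtain the pointwise bound $|\Cl(k)[2]|\ll_n|\Cl(F)[2]|^2\,N(\Delta_{k/F})^{\eps}$, the ramified-prime and unit contributions being absorbed into $N(\Delta_{k/F})^{\eps}$ and $O_n(1)$. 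The residual sum $\sum_k N(\Delta_{k/F})^{-(4n-3)/(2n+1)+\eps}$ I would evaluate by partial summation against the count of quadratic $k/F$ supplied by Theorem~\ref{T:relative} over $F$; this supplies the remaining power of $|\Cl(F)[2]|$ together with the matching power of $|\Delta_F|$, and at $n=2$ the borderline exponent contributes only an extra $\log X$, harmlessly absorbed into $X^{\eps}$.

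I expect the main obstacle to be exactly this final bookkeeping: proving the uniform estimate $|\Cl(k)[2]|\ll_n|\Cl(F)[2]|^2N(\Delta_{k/F})^{\eps}$ with the correct exponent, and combining it with the convergence of the discriminant sum so that the total power of $|\Cl(F)[2]|$ is at most $3$ and the exponent of $|\Delta_F|$ is exactly $\tfrac{2}{2n+1}$ rather than something larger. Checking that the discarded Galois quartics and the tail of the main-term series are of genuinely smaller order, uniformly in $F$ and with the same explicit dependence on $|\Delta_F|$ and $|\Cl(F)[2]|$, is routine but must be carried out against these exact bounds.
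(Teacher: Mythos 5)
Your proposal follows essentially the same route as the paper: reduce to counting quadratic-over-quadratic towers $K/k/F$ (with the factor $\tfrac12$ from conjugate pairs and the Galois quartics discarded as lower order), apply Theorem~\ref{T:relative} with base field $k$ of degree $2n$, bound $|\Cl(k)[2]|\ll_n|\Cl(F)[2]|^2|\Delta_k|^{\eps}$ by genus theory (the paper cites Theorem~2.1 of~\cite{MR4428868} for exactly this), and then sum the resulting errors over $k$ by partial summation, arriving at the same exponent $-2+\tfrac{5}{2n+1}-\eps$ of $N(\Delta_{k/F})$ that appears in the paper. The one adjustment you should make is in that final partial summation: integrating $N(\Delta_{k/F})^{\beta}$ with $\beta<-1$ against the full asymptotic of Theorem~\ref{T:relative} over $F$ would leak an extra factor $|\Delta_F|^{1/(n+1)}$ from its error term, so the paper instead uses the crude uniform bound $N_F(Y)\ll_n|\Cl(F)[2]|\,|\Delta_F|^{\eps}Y$ coming from the Selmer-group parametrization; this is what produces the stated exponent $|\Delta_F|^{\frac{2}{2n+1}+\eps}$ and caps the total power of $|\Cl(F)[2]|$ at $3$, precisely the bookkeeping concern you flagged.
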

In the above, $\Delta_{L/M}$ denotes the relative discriminant and $N$
denotes the absolute norm.

We note in passing that other variations on the problem
of counting $D_4$-quartic fields have been investigated,
including counting the fields when ordered by their Artin conductor
(see~\cite{altug.ali.shankar.varma.wilson, BFSLV22, Friedrichsen}).

Late in the preparation of this manuscript we became aware that
Barquero-Sanchez, Masri, and Thorne
(see Theorem 2.4 of~\cite{barquero.masri.thorne})
already obtained an improvement to $O(X^{5/7 + \eps})$
for the closely related problem of counting $D_4$-quartic fields that
are totally imaginary extensions of totally real quadratic fields.

\section{Initial setup}\label{S:initial}
The following is a truncated version of Corollary~2.2 of~\cite{cohen.diaz.olivier}:
\begin{equation}\label{E:fields}
\sum_{\substack{[k:\Q]=2\\
|\Delta_{k}|\leq \sqrt{X}}}
\;\;
\sum_{\substack{[K:k]=2\\N(\Delta_{K/k})\leq X/\Delta_k^2}}1
=
2\sum_{\substack{[K:\Q]=4\\G(K/\Q)\simeq D_4\\|\Delta_{K}|\leq X}} 1
+
\sum_{\substack{[K:\Q]=4\\G(K/\Q)\simeq C_4\\|\Delta_{K}|\leq X}} 1
+
3\sum_{\substack{[K:\Q]=4\\G(K/\Q)\simeq V_4\\ |\Delta_{K}|\leq X}} 1
\,.
\end{equation}
In words, counting extensions that are quadratic over quadratic picks
up every $D_4$-quartic (up to isomorphism) exactly twice, every $V_4$-quartic 
exactly three times, and every $C_4$-quartic exactly once.  This is observed in
the following field diagram for the Galois closure $L$ of a generic $D_4$-quartic field $K_1$.
(Every line stands for a degree $2$ extension.)

\begin{center}
\begin{tikzpicture}
    \node (Q1) at (0,0) {$\Q$};
    \node (Q2) at (-2,2) {$k_1$};
    \node (Q3) at (0,2) {$k_2$};
    \node (Q4) at (2,2) {$k_3$};
    
    \node (Q5) at (-3.5,4) {$K_1$};
    \node (Q6) at (-2.5,4) {$K_2$};
    \node (Q7) at (0,4) {$k_1k_3$};
    \node (Q8) at (2.5,4) {$K_3$};
    \node (Q9) at (3.5,4) {$K_4$};
    \node (Q10) at (0,6) {$L$};

    \draw (Q1)--(Q2);
    \draw (Q1)--(Q3);
    \draw (Q1)--(Q4);
        
    \draw (Q2)--(Q5);
    \draw (Q2)--(Q6);

    \draw (Q2)--(Q7);
    \draw (Q3)--(Q7);
    \draw (Q4)--(Q7);

    \draw (Q4)--(Q8);
    \draw (Q4)--(Q9) ;

    \draw (Q5)--(Q10);
    \draw (Q6)--(Q10);

    \draw (Q8)--(Q10);
    \draw (Q9)--(Q10) ;

    \draw (Q7)--(Q10);

\end{tikzpicture}
\end{center}
The truncations in the subscripts in~(\ref{E:fields})
follow immediately from
the identity $\Delta_K=N(\Delta_{K/k})\Delta_k^2$.

The first term on the right of~(\ref{E:fields}) is what we want to count.
The next two terms on the right are $O(\sqrt{X})$ and $O(\sqrt{X}(\log X)^2)$,
respectively (see~\cite{baily, Maki}).
The main task is to deal with the innermost sum on the left, which counts relative
quadratic extensions.

\section{Counting relative quadratic extensions}

In this section, we consider the problem of counting quadratic extensions over an arbitrary base field $k$
of degree $n$, which will ultimately lead to the proof of Theorem~\ref{T:relative}.

\subsection{Parametrizing relative quadratics}\label{S:param}

We parametrize all quadratic extensions $K/k$ in terms of data from $k$.
(We refer the reader to~\cite{cohen.diaz.olivier} for the proofs of the results in this subsection.)
Let $V(k)$ denote the set of all $u\in k^*$ such that $(u)=\mathfrak{q}^2$
for some ideal $\mathfrak{q}$.
Write $S(k)=V(k)/(k^*)^2$.
This is the $2$-Selmer group of $k$.
Given an element $\overline{u}\in S(k)$ we will always tacitly assume $(u,2)=1$.
Let $A(k)$ denote the set of all integral squarefree ideals $\mathfrak{a}$
such that $\overline{\mathfrak{a}}\in\Cl(k)^2$.

There is a bijection 
$$
  A(k)\times S(k) \to \left\{[K:k]\leq 2\right\}
  \,.
$$
Under this map, a pair $(\mathfrak{a},\overline{u})$ corresponds to an extension $K/k$,
and 
the ``identity'' corresponds to the trivial extension.

We wish to determine the discriminant $\Delta_{K/k}$ in terms of $(\mathfrak{a},\overline{u})$.
To this end, we define an ideal $\mathfrak{c}(\mathfrak{a},\overline{u})$
in the following way.
Given $(\mathfrak{a},\overline{u})$, first write $\mathfrak{a}\mathfrak{q}^2=(\alpha_0)$ with
$(\mathfrak{q},2)=1$; then let $\mathfrak{c}$ be the largest ideal such that
$\mathfrak{c}\mid 2$, $(\mathfrak{c},\mathfrak{a})=1$, and
$x^2\equiv \alpha_0 u\pmod{\mathfrak{c}^2}$ is solvable (in the multiplicative sense).
One checks that this definition is independent of the choices involved.  Then one has
$$
  \Delta_{K/k}=\frac{4\mathfrak{a}}{\mathfrak{c}^2}
  \,.
$$

\subsection{A formula for the number of relative quadratics}

Using the parametrization from~\S\ref{S:param},
we derive an expression for the number of quadratic extensions 
$K/k$ with $N(\Delta_{K/k})\leq X$.
The reader should compare this result to Theorem~1.1 of~\cite{cohen.diaz.olivier}.

\begin{lemma}\label{L:first}
Let $k$ be a number field of degree $n$.  One has
\begin{equation}\label{E:4}
\sum_{\substack{[K:k]=2\\N(\Delta_{K/k})\leq X}}
\hspace{-2ex}
1
=
-1+
2^{r_1(k)+r_2(k)}
\sum_{\substack{\mathfrak{d}\mid 2}}
\frac{1}{N(\mathfrak{d})}
\hspace{-1ex}
\sum_{\substack{\chi\in\widehat{\Cl_{\mathfrak{d}^2}(k)}\\\chi^2=\chi_0 }}
\sum_{\substack{\mathfrak{c}\mid \mathfrak{d}}}
\mu\left(\frac{\mathfrak{d}}{\mathfrak{c}}\right)
\hspace{-2ex}
\sum_{\substack{\mathfrak{a}\text{ squarefree}\\
N(\mathfrak{a})\leq \frac{N(\mathfrak{c}^2)}{4^n}X}}
\hspace{-2ex}
\chi(\mathfrak{a})
\,,
\end{equation}
where
$\Cl_{\mathfrak{d}^2}(k)$ is the ray class group of $k$ modulo $\mathfrak{d}^2$, and 
$\sum_\chi$ is over the characters of $\Cl_{\mathfrak{d}^2}(k)$ satisfying $\chi^2=\chi_0$.
\end{lemma}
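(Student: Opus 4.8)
The plan is to convert the bijection of \S\ref{S:param} and the discriminant formula $\Delta_{K/k}=4\mathfrak{a}/\mathfrak{c}^2$ into a weighted lattice point count, and then to resolve the dependence of $\mathfrak{c}$ on $\overline{u}$ by a fiber count, a M\"obius inversion over the divisors of $2$, and character orthogonality. First I would use the bijection $A(k)\times S(k)\to\{[K:k]\le 2\}$ to write the left-hand side of~(\ref{E:4}) as $-1+\sum_{(\mathfrak{a},\overline{u})}\mathbf{1}[N(\mathfrak{a})\le N(\mathfrak{c})^2X/4^n]$, where the sum runs over $(\mathfrak{a},\overline{u})\in A(k)\times S(k)$, $\mathfrak{c}=\mathfrak{c}(\mathfrak{a},\overline{u})$, and the $-1$ removes the trivial extension (the identity pair, for which $\mathfrak{a}=(1)$ and $\mathfrak{c}=(2)$); here I have used $N(\Delta_{K/k})=4^nN(\mathfrak{a})/N(\mathfrak{c})^2$. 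Grouping the pairs according to the exact value $\mathfrak{c}(\mathfrak{a},\overline{u})=\mathfrak{d}_0\mid 2$ reduces everything to counting, for each fixed squarefree $\mathfrak{a}$ and each $\mathfrak{d}_0\mid 2$, how many $\overline{u}$ produce that value of $\mathfrak{c}$.

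The key step is the fiber count. For $\mathfrak{d}\mid 2$, the condition $\mathfrak{d}\mid\mathfrak{c}(\mathfrak{a},\overline{u})$ is precisely the solvability of $x^2\equiv\alpha_0 u\pmod{\mathfrak{d}^2}$, where $(\alpha_0)=\mathfrak{a}\mathfrak{q}^2$; since $u$ and $\alpha_0$ are coprime to $\mathfrak{d}$, this says that the image of $\alpha_0 u$ in $(\Ok/\mathfrak{d}^2)^*$ is a square. I would therefore introduce the homomorphism $\phi\colon S(k)\to(\Ok/\mathfrak{d}^2)^*/((\Ok/\mathfrak{d}^2)^*)^2$ induced by $\overline{u}\mapsto u$, so that the number of $\overline{u}$ with $\mathfrak{d}\mid\mathfrak{c}(\mathfrak{a},\overline{u})$ equals $|\ker\phi|=|S(k)|/|\operatorname{im}\phi|$ when $\phi(\alpha_0)\in\operatorname{im}\phi$, and $0$ otherwise. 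It then remains to identify the constants: using $|S(k)|=2^{r_1(k)+r_2(k)}|\Cl(k)[2]|$ together with the exact sequence $\Ok^*\to(\Ok/\mathfrak{d}^2)^*\to\Cl_{\mathfrak{d}^2}(k)\to\Cl(k)\to 1$, I expect this fiber count, which I denote $g(\mathfrak{a},\mathfrak{d})$, to equal $\tfrac{2^{r_1(k)+r_2(k)}}{N(\mathfrak{d})}\,|\Cl_{\mathfrak{d}^2}(k)[2]|\cdot\mathbf{1}[\mathfrak{a}\in\Cl_{\mathfrak{d}^2}(k)^2]$, the condition $\phi(\alpha_0)\in\operatorname{im}\phi$ being equivalent to $\mathfrak{a}$ lying in $\Cl_{\mathfrak{d}^2}(k)^2$ (which in turn forces $\mathfrak{a}\in A(k)$). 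This is where I would lean on the calculations of Section~3 of~\cite{cohen.diaz.olivier}.

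With the fiber count in hand, passing from ``$\mathfrak{c}(\mathfrak{a},\overline{u})=\mathfrak{d}_0$ exactly'' to the divisibility counts $g(\mathfrak{a},\mathfrak{d})$ is a M\"obius inversion over the divisors of $2$: writing the exact-value count as $\sum_{\mathfrak{d}_0\mid\mathfrak{d}\mid 2}\mu(\mathfrak{d}/\mathfrak{d}_0)\,g(\mathfrak{a},\mathfrak{d})$ and interchanging the order of summation produces the inner sum $\sum_{\mathfrak{c}\mid\mathfrak{d}}\mu(\mathfrak{d}/\mathfrak{c})$ together with the bound $N(\mathfrak{a})\le N(\mathfrak{c})^2X/4^n$. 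Finally, orthogonality of the quadratic characters of $\Cl_{\mathfrak{d}^2}(k)$ gives $\sum_{\chi^2=\chi_0}\chi(\mathfrak{a})=|\Cl_{\mathfrak{d}^2}(k)[2]|\cdot\mathbf{1}[\mathfrak{a}\in\Cl_{\mathfrak{d}^2}(k)^2]$, which I would use to replace the factor $|\Cl_{\mathfrak{d}^2}(k)[2]|\cdot\mathbf{1}[\mathfrak{a}\in\Cl_{\mathfrak{d}^2}(k)^2]$ by the character sum; this also automatically imposes $(\mathfrak{a},\mathfrak{d})=1$, since $\chi(\mathfrak{a})=0$ otherwise. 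Collecting the prefactor $2^{r_1(k)+r_2(k)}$ and the weights $1/N(\mathfrak{d})$ and summing over $\mathfrak{d}\mid 2$ then yields~(\ref{E:4}).

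The main obstacle is the fiber-count identity of the second step: the bookkeeping of $2$-torsion as one passes between the Selmer group $S(k)$, the unit group, the group $(\Ok/\mathfrak{d}^2)^*$, and the ray class group $\Cl_{\mathfrak{d}^2}(k)$, so that the prefactor emerges as exactly $2^{r_1(k)+r_2(k)}N(\mathfrak{d})^{-1}|\Cl_{\mathfrak{d}^2}(k)[2]|$. A secondary delicate point is the behavior at the primes above $2$: the definition of $\mathfrak{c}$ through solvability of $x^2\equiv\alpha_0 u$ ``in the multiplicative sense'' must be matched carefully with membership of $\alpha_0 u$ in the squares of $(\Ok/\mathfrak{d}^2)^*$, and one must confirm that $\mathfrak{c}$, and hence $\phi$, is independent of the choice of $\alpha_0$.
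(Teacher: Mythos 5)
Your proposal is correct and follows essentially the same route as the paper: the bijection $A(k)\times S(k)\to\{[K:k]\le 2\}$, a M\"obius inversion over divisors of $2$ to pass between the exact value of $\mathfrak{c}(\mathfrak{a},\overline{u})$ and the divisibility/solvability conditions, the fiber count $\tfrac{2^{r_1(k)+r_2(k)}}{N(\mathfrak{d})}|\Cl_{\mathfrak{d}^2}(k)[2]|\cdot\mathbf{1}[\overline{\mathfrak{a}}\in\Cl_{\mathfrak{d}^2}(k)^2]$ imported from Section~3 of~\cite{cohen.diaz.olivier}, and orthogonality of quadratic ray class characters. The only cosmetic difference is that you perform the fiber count before the M\"obius inversion while the paper inverts first and then evaluates the resulting solvability counts.
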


\begin{proof}
For notational ease, we write $A=A(k)$ and $S=S(k)$.
We observe
\begin{align}
\label{E:0}
\sum_{\substack{[K:k]=2\\N(\Delta_{K/k})\leq X}}1
&=
-1+
\sum_{\mathfrak{a}\in A}\sum_{\substack{\overline{u}\in S\\
N\left(\frac{4\mathfrak{a}}{\mathfrak{c}(\mathfrak{a},\overline{u})^2}\right)\leq X}} 1
\\
\label{E:1}
&=
-1+
\sum_{\mathfrak{c}\mid 2}
\sum_{\substack{\mathfrak{a}\in A\\  N(\mathfrak{a})\leq \frac{N(\mathfrak{c}^2)}{4^n}X}}
\sum_{\substack{\overline{u}\in S\\\mathfrak{c}(\mathfrak{a},\overline{u})=\mathfrak{c}}} 1
\,.
\end{align}
First we deal with the innermost sum of (\ref{E:1}),
which is nonzero only if \mbox{$(\mathfrak{a},\mathfrak{c})=1$}.
For fixed $\mathfrak{c}$ and $\mathfrak{a}$
with $\mathfrak{c}\mid 2$ and $(\mathfrak{c},\mathfrak{a})=1$ we have
$$
\sum_{\substack{\overline{u}\in S\\ \exists x\;x^2\equiv\alpha_0u\pmod{\mathfrak{c}^2}}}
1
=
\sum_{\substack{\mathfrak{c}\mid\mathfrak{d}\mid 2\\(\mathfrak{d},\mathfrak{a})=1}}
\sum_{\substack{\overline{u}\in S\\ \mathfrak{c}(\mathfrak{a},\overline{u})=\mathfrak{d}}}
1
=
\sum_{\substack{\mathfrak{d}\mid\frac{2}{\mathfrak{c}}\\(\mathfrak{d},\mathfrak{a})=1}}
\sum_{\substack{\overline{u}\in S\\ \mathfrak{c}(\mathfrak{a},\overline{u})=\mathfrak{d}\mathfrak{c}}}
1
$$
and therefore,
using M\"obius inversion, we obtain
\begin{equation}\label{E:2}
\sum_{\substack{\overline{u}\in S\\ \mathfrak{c}(\mathfrak{a},\overline{u})=\mathfrak{c}}}
1
=
\sum_{\substack{\mathfrak{d}\mid\frac{2}{\mathfrak{c}} \\(\mathfrak{d},\mathfrak{a})=1}}
\mu(\mathfrak{d})
\sum_{\substack{\overline{u}\in S\\ \exists x\;x^2\equiv\alpha_0u\pmod{\mathfrak{c}^2\mathfrak{d}^2}}}
1
\,.
\end{equation}

Plugging~(\ref{E:2}) into~(\ref{E:1}) yields
\begin{align}
\nonumber
\sum_{\substack{[K:k]=2\\N(\Delta_{K/k})\leq X}}1
&=
-1+
\sum_{\mathfrak{c}\mid 2}
\sum_{\substack{\mathfrak{a}\in A\\ (\mathfrak{a},\mathfrak{c})=1\\ N(\mathfrak{a})\leq \frac{N(\mathfrak{c}^2)}{4^n}X}}
\sum_{\substack{\mathfrak{d}\mid\frac{2}{\mathfrak{c}} \\(\mathfrak{d},\mathfrak{a})=1}}
\mu(\mathfrak{d})
\sum_{\substack{\overline{u}\in S\\ \exists x\;x^2\equiv\alpha_0u\pmod{\mathfrak{c}^2\mathfrak{d}^2}}}
1
\\
\label{E:2b}
&=
-1+
\sum_{\substack{\mathfrak{e}\mid 2}}
\sum_{\substack{\mathfrak{c}\mid\mathfrak{e}}}
\mu\left(\frac{\mathfrak{e}}{\mathfrak{c}}\right)
\sum_{\substack{\mathfrak{a}\in A\\(\mathfrak{e},\mathfrak{a})=1\\  N(\mathfrak{a})\leq \frac{N(\mathfrak{c}^2)}{4^n}X}}
\sum_{\substack{\overline{u}\in S\\ \exists x\;x^2\equiv\alpha_0u\pmod{\mathfrak{e}^2}}}
1
\,.
\end{align}

Proposition 3.9 and Lemma~3.10 of~\cite{cohen.diaz.olivier} together
with the orthogonality relations give
\begin{align}
\nonumber
\sum_{\substack{\overline{u}\in S\\ \exists x\;x^2\equiv\alpha_0u\pmod{\mathfrak{c}^2}}}
1
&=
\begin{cases}
2^{r_1(k)+r_2(k)}|\Cl_{\mathfrak{c}^2}(k)[2]|N(\mathfrak{c})^{-1}
&
\overline{\mathfrak{a}}\in\Cl_{\mathfrak{c}^2}(k)^2
\\
\label{E:3}
0
&
\overline{\mathfrak{a}}\not\in\Cl_{\mathfrak{c}^2}(k)^2
\end{cases}
\\
&=
\frac{2^{r_1(k)+r_2(k)}}{N(\mathfrak{c})}
\sum_{\substack{\chi\in\widehat{\Cl_{\mathfrak{c}^2}(k)}\\\chi^2=\chi_0 }}
\chi(\mathfrak{a})
\,.
\end{align}
Pugging (\ref{E:3}) into (\ref{E:2b}) leads to the desired result.
\end{proof}

\section{Sums of ray class characters}\label{S:a1}
Using the formula given in Lemma~\ref{L:first} as a starting point,
our goal is to derive the asymptotic expression for the number of relative quadratics
given in Theorem~\ref{T:relative}.
In order to do this, we will establish two lemmas on sums of ray class characters.

Let $\Phi$ denote the standard generalization of Euler's totient function to the ideals of $k$.
Let $\chi_0$ denote the principal character modulo
$\mathfrak{m}$.
It is well-known (see Satz XCV of~\cite{landau2}) that
\begin{equation}\label{E:Landau}
\sum_{\substack{
N(\mathfrak{a})\leq X }}
\chi_0(\mathfrak{a})
=\frac{\Phi(\mathfrak{m})}{N(\mathfrak{m})} \zeta_k^*(1)X + O_k(X^{\errorexp})
\,.
\end{equation}
However, the implicit constant depends on the number field $k$.  On the other hand,
recently a uniform version of Landau's method was given by Lowry-Duda, Taniguchi, and Thorne
(see~\cite{lowry-duda.taniguchi.thorne}).
Their Theorem~3 states
\begin{equation}\label{E:ltt3}
\sum_{\substack{N(\mathfrak{a})\leq X }}
1
=\zeta_k^*(1)X + O_n(|\Delta_k|^\frac{1}{n+1}X^{1-\frac{2}{n+1}}(\log X)^{n-1})
\,.
\end{equation}
Using (\ref{E:ltt3}) one can easily prove a uniform version of (\ref{E:Landau}).
We give such a result that holds for both principal and nonprincipal characters,
where the implicit constant only depends on $n=[k:\Q]$.

\begin{lemma}\label{L:second}
Let $k$ be a number field of degree $n\geq 2$.  Let $\chi\in\widehat{\Cl_\mathfrak{m}(k)}$ be
a ray class character of $k$ modulo $\mathfrak{m}$ with conductor $\mathfrak{f}$.
Let $\tau(\mathfrak{m})$ denote the number of ideal divisors of $\mathfrak{m}$.
We have
$$
  \sum_{N(\mathfrak{a})\leq X}\chi(\mathfrak{a})
  =
  \delta(\chi)
  \frac{\Phi(\mathfrak{m})}{N(\mathfrak{m})}\zeta_k^*(1)X+
  O_n(\tau(\mathfrak{m})(N(\mathfrak{f})|\Delta_k|)^{\frac{1}{n+1}}X^{\errorexp}(\log X)^{n-1})
  \,,
$$
where $\delta(\chi)$ equals $1$ if $\chi=\chi_0$ and $0$ otherwise.
\end{lemma}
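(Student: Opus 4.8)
The plan is to reduce to a primitive character, dispose of the principal case directly from~(\ref{E:ltt3}), and then treat a nonprincipal primitive character by running Landau's method --- the same analytic input behind~(\ref{E:ltt3}) --- on the Hecke $L$-function attached to $\chi$. First I would strip the modulus down to the conductor. Since $\chi$ is induced from the primitive character $\chi^*$ modulo $\mathfrak{f}$, we have $\chi(\mathfrak{a})=\chi^*(\mathfrak{a})$ when $(\mathfrak{a},\mathfrak{m})=1$ and $\chi(\mathfrak{a})=0$ otherwise. M\"obius inversion over $\mathfrak{d}\mid\mathfrak{m}$ removes the coprimality condition: writing $\mathfrak{a}=\mathfrak{d}\mathfrak{a}'$ and noting that the inner sum vanishes unless $(\mathfrak{d},\mathfrak{f})=1$, one gets
\[
\sum_{N(\mathfrak{a})\le X}\chi(\mathfrak{a})
=\sum_{\substack{\mathfrak{d}\mid\mathfrak{m}\\(\mathfrak{d},\mathfrak{f})=1}}
\mu(\mathfrak{d})\,\chi^*(\mathfrak{d})
\sum_{N(\mathfrak{a}')\le X/N(\mathfrak{d})}\chi^*(\mathfrak{a}')\,.
\]
There are at most $\tau(\mathfrak{m})$ divisors, each with $|\mu(\mathfrak{d})\chi^*(\mathfrak{d})|\le 1$; the factor $\tau(\mathfrak{m})$ in the final error and, in the principal case, the Euler product $\Phi(\mathfrak{m})/N(\mathfrak{m})=\prod_{\mathfrak{p}\mid\mathfrak{m}}(1-N(\mathfrak{p})^{-1})$ in the main term both arise from this step. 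Thus it suffices to estimate $\sum_{N(\mathfrak{a})\le Y}\chi^*(\mathfrak{a})$ uniformly for the primitive character $\chi^*$ of conductor $\mathfrak{f}$.

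If $\chi=\chi_0$ then $\chi^*$ is trivial, $\mathfrak{f}=(1)$, and $N(\mathfrak{f})=1$, so each inner sum is literally $\sum_{N(\mathfrak{a}')\le X/N(\mathfrak{d})}1$ and~(\ref{E:ltt3}) applies verbatim. The main terms combine to $\zeta_k^*(1)X\sum_{\mathfrak{d}\mid\mathfrak{m}}\mu(\mathfrak{d})/N(\mathfrak{d})=\frac{\Phi(\mathfrak{m})}{N(\mathfrak{m})}\zeta_k^*(1)X$, while the errors sum to $O_n\!\big(\tau(\mathfrak{m})|\Delta_k|^{1/(n+1)}X^{\errorexp}(\log X)^{n-1}\big)$ after bounding $X/N(\mathfrak{d})\le X$ and $\log(X/N(\mathfrak{d}))\le\log X$. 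This is exactly the claimed formula with $\delta(\chi_0)=1$ and $N(\mathfrak{f})=1$.

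For a nonprincipal primitive $\chi^*$ there is no main term, and I would estimate $\sum_{N(\mathfrak{a})\le Y}\chi^*(\mathfrak{a})$ by the same Perron/contour argument that Lowry-Duda, Taniguchi, and Thorne use to prove~(\ref{E:ltt3}), but applied to the Hecke $L$-function $L(s,\chi^*)$ in place of $\zeta_k(s)$. The only structural differences are that $L(s,\chi^*)$ is entire, so there is no residue and hence $\delta(\chi^*)=0$, and that its analytic conductor is $N(\mathfrak{f})|\Delta_k|$ rather than $|\Delta_k|$. Carrying this conductor through their uniform bounds for $L$ on vertical lines replaces $|\Delta_k|^{1/(n+1)}$ by $(N(\mathfrak{f})|\Delta_k|)^{1/(n+1)}$ and yields $O_n\!\big((N(\mathfrak{f})|\Delta_k|)^{1/(n+1)}Y^{\errorexp}(\log Y)^{n-1}\big)$; feeding this into the displayed sum gives the stated bound.

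The main obstacle is obtaining the conductor dependence without losing a factor of $|\Cl_{\mathfrak{f}}(k)|$. The naive route of grouping ideals by ray class, $\sum_{N(\mathfrak{a})\le Y}\chi^*(\mathfrak{a})=\sum_{C}\chi^*(C)\,a(C,Y)$, and bounding the individual class-counting errors is too lossy: although the (class-independent) main terms cancel because $\chi^*$ is nonprincipal, summing $|\Cl_{\mathfrak{f}}(k)|$ error terms reintroduces a spurious factor of the class number. Working directly with $L(s,\chi^*)$ builds the cancellation among ray classes into the \emph{absence of a pole}, so the conductor $N(\mathfrak{f})|\Delta_k|$ enters only to the first power under the exponent $\tfrac{1}{n+1}$. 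The one point requiring genuine care is verifying that the vertical-line estimates and the contour optimization of Lowry-Duda, Taniguchi, and Thorne go through with $N(\mathfrak{f})|\Delta_k|$ in place of $|\Delta_k|$ while preserving the $(\log)^{n-1}$ saving.
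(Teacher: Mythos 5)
Your proposal is correct and follows essentially the same route as the paper: M\"obius inversion over divisors of $\mathfrak{m}$ to reduce to the primitive character (yielding the $\tau(\mathfrak{m})$ factor and, for $\chi=\chi_0$, the Euler factor $\Phi(\mathfrak{m})/N(\mathfrak{m})$), the uniform ideal-counting estimate~(\ref{E:ltt3}) for the principal case, and the Lowry-Duda--Taniguchi--Thorne machinery run on the entire Hecke $L$-function with analytic conductor $N(\mathfrak{f})|\Delta_k|$ for the nonprincipal primitive case. The paper likewise defers the last step to Theorem~5 of~\cite{lowry-duda.taniguchi.thorne} with the same modifications you describe (no pole, conductor $N(\mathfrak{f})|\Delta_k|$), so your level of detail matches its proof.
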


\begin{proof}
Equation (\ref{E:ltt3}),
together with the identity
$$
  \sum_{\substack{
  N(\mathfrak{a})\leq X}}\chi_0(\mathfrak{a})
  =
  \sum_{\mathfrak{d}\mid\mathfrak{m}}\mu(\mathfrak{d})
  \sum_{N(\mathfrak{a})\leq X/N(\mathfrak{d})}1
  \,,
$$
allows one to conclude that
\begin{align*}
\sum_{\substack{
N(\mathfrak{a})\leq X }}
\chi_0(\mathfrak{a})
&=
\sum_{\mathfrak{d}\mid \mathfrak{m}}
\mu(\mathfrak{d})\zeta_k^*(1)\frac{X}{N(\mathfrak{d})}
+O_n\left(
\sum_{\mathfrak{d}\mid \mathfrak{m}}
|\Delta_k|^{\frac{1}{n+1}}
\left(\frac{X}{N(\mathfrak{d})}\right)^{1-\frac{2}{n+1}}(\log X)^{n-1}
\right)
\\
&=
\zeta_k^*(1)X\sum_{\mathfrak{d}\mid \mathfrak{m}}
\frac{\mu(\mathfrak{d})}{N(\mathfrak{d})}
\\
&\qquad
+O_n\left(
|\Delta_k|^{\frac{1}{n+1}}
X^{1-\frac{2}{n+1}}(\log X)^{n-1}
\sum_{\mathfrak{d}\mid \mathfrak{m}}
\left(\frac{1}{N(\mathfrak{d})}\right)^{1-\frac{2}{n+1}}
\right)
\,.
\end{align*}
The last factor in the error term is bounded by $\tau(\mathfrak{m})$,
and the result follows in the case where $\chi=\chi_0$.

We now turn to the case where the character is nonprincipal.
Invoking Theorem~5 of~\cite{lowry-duda.taniguchi.thorne}, we find that for a nonprincipal primitive ray class character $\psi$ with conductor $\mathfrak{f}$ one has
\begin{equation}\label{E:primitive}
\sum_{\substack{N(\mathfrak{a})\leq X }}
\psi(\mathfrak{a})
=
O_n((N(\mathfrak{f})|\Delta_k|)^{\frac{1}{n+1}}X^{\errorexp}(\log X)^{n-1})
\,.
\end{equation}
Indeed, Theorem~5 of~\cite{lowry-duda.taniguchi.thorne} applies to the setting of Hecke $L$-series,
as is indicated in the discussion following the statement of their Theorem~2.
One follows closely the proof of their Theorem 3, making the necessary modifications.
The most significant differences are the absence of a pole at $s=1$ and
the different functional equation.  Ultimately, this leads to replacing
$|\Delta_k|$ with $|\Delta_k| N(\mathfrak{f})$ in many of the equations.
One possible reference for the functional equation of a Hecke $L$-series is Chapter VII of~\cite{neukirch}.

To extend the estimate in $(\ref{E:primitive})$ to
all nonprincipal ray class characters $\chi$ modulo $\mathfrak{m}$,
one uses the identity
$$
  \sum_{N(\mathfrak{a})\leq X}\chi(\mathfrak{a})
  =
  \sum_{\substack{\mathfrak{d}\mid\mathfrak{m}}}\mu(\mathfrak{d})\psi(\mathfrak{d})
  \sum_{N(\mathfrak{a})\leq X/N(\mathfrak{d})}\psi(\mathfrak{a})
  \,,
$$
where $\psi$ is the primitive character inducing $\chi$.
The result follows.
\end{proof}

For our application, we will need to consider sums over squarefree ideals.
To that end, we establish the following lemma.
It will be convenient to define the $\mathfrak{m}$-imprimitive Dedekind zeta function
as $\zeta_k^\mathfrak{m}(s)=
\prod_{\mathfrak{p}\mid\mathfrak{m}}(1-N(\mathfrak{p})^{-s})
\zeta_k(s)
$.

\begin{lemma}\label{L:third}
Notation as in Lemma~\ref{L:second}.
We have
\begin{align*}
\sum_{\substack{\mathfrak{a} \text{ squarefree}\\N(\mathfrak{a})\leq X}}
\chi(\mathfrak{a})
&=
\delta(\chi)
\frac{\Phi(\mathfrak{m})}{N(\mathfrak{m})}\frac{\zeta_k^*(1)}{\zeta_k^\mathfrak{m}(2)}X
\;+
\\
&
\tau(\mathfrak{m})\cdot
\begin{cases}
O\left(
(N(\mathfrak{f})|\Delta_k|)^{1/3}\log|\Delta_k|
\cdot
X^{1/2}
\log X
\right)
&n=2\,,\\
O\left(
(N(\mathfrak{f})|\Delta_k|)^{1/4}(\log|\Delta_k|)^2
\cdot
X^{1/2}
(\log X)^3
\right)
&n=3\,,\\
O_n\left(
(N(\mathfrak{f})|\Delta_k|)^{\frac{1}{n+1}}
X^{1-\frac{2}{n+1}}
(\log X)^{n-1}
\right)
&n>3\,.\\
\end{cases}
\end{align*}
\end{lemma}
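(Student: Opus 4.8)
The plan is to reduce the sum over squarefree ideals to a sum over all ideals via Möbius inversion, then apply Lemma~\ref{L:second} to the resulting inner sums. The key combinatorial identity is
\begin{equation*}
\sum_{\substack{\mathfrak{a}\text{ squarefree}\\N(\mathfrak{a})\leq X}}\chi(\mathfrak{a})
=
\sum_{\mathfrak{b}}\mu(\mathfrak{b})\chi(\mathfrak{b})^2
\sum_{\substack{N(\mathfrak{a})\leq X/N(\mathfrak{b})^2}}\chi(\mathfrak{a})
\,,
\end{equation*}
which isolates the squarefree condition by writing each ideal uniquely as a squarefree part times $\mathfrak{b}^2$. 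Since $\chi^2=\chi_0$ in our application (the characters arising in Lemma~\ref{L:first} satisfy this), the twisting factor $\chi(\mathfrak{b})^2$ equals $\chi_0(\mathfrak{b})$, which simplifies matters, though one should carry out the argument for general $\chi$ and specialize. First I would substitute the estimate from Lemma~\ref{L:second} into the inner sum for each $\mathfrak{b}$ with $N(\mathfrak{b})^2\leq X$.

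Next I would separate the main term from the error term. When $\chi=\chi_0$, the main terms assemble into
\begin{equation*}
\frac{\Phi(\mathfrak{m})}{N(\mathfrak{m})}\zeta_k^*(1)X\sum_{\mathfrak{b}}\frac{\mu(\mathfrak{b})}{N(\mathfrak{b})^2}
\,,
\end{equation*}
and the Euler product $\sum_{\mathfrak{b}}\mu(\mathfrak{b})N(\mathfrak{b})^{-2}=\zeta_k(2)^{-1}$, adjusted to $\zeta_k^\mathfrak{m}(2)^{-1}$ because the ideals $\mathfrak{b}$ coprime to $\mathfrak{m}$ are the only ones contributing when $\chi_0(\mathfrak{b})\neq 0$; this produces exactly the claimed constant $\Phi(\mathfrak{m})N(\mathfrak{m})^{-1}\zeta_k^*(1)/\zeta_k^\mathfrak{m}(2)$. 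The tail of the main-term series, namely $\sum_{N(\mathfrak{b})^2>X}\mu(\mathfrak{b})N(\mathfrak{b})^{-2}$, must be controlled; since the number of ideals of norm up to $T$ grows linearly in $T$ (by~(\ref{E:ltt3})), this tail is $O(X^{-1/2})$ after accounting for $|\Delta_k|$, and when multiplied by $X$ it contributes $O(X^{1/2})$ uniformly, which is absorbed into the stated error for $n=2,3$ and is dominated by the error for $n>3$.

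The error analysis is where the three cases split. Summing the error term from Lemma~\ref{L:second} over $\mathfrak{b}$ gives
\begin{equation*}
O_n\!\left(\tau(\mathfrak{m})(N(\mathfrak{f})|\Delta_k|)^{\frac{1}{n+1}}(\log X)^{n-1}\sum_{N(\mathfrak{b})^2\leq X}\left(\frac{X}{N(\mathfrak{b})^2}\right)^{1-\frac{2}{n+1}}\right)
\,,
\end{equation*}
so everything reduces to estimating $\sum_{N(\mathfrak{b})\leq\sqrt{X}}N(\mathfrak{b})^{-2(1-\frac{2}{n+1})}$. For $n>3$ the exponent $2(1-\frac{2}{n+1})$ exceeds $1$, so the sum converges and the factor of $X^{1-\frac{2}{n+1}}$ passes through unchanged, yielding the clean third case. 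For $n=3$ the exponent equals exactly $1$, so the sum is $O(\log X)$, contributing an extra logarithmic factor and matching the $(\log X)^3$ in the statement. For $n=2$ the exponent is $2/3<1$, so the sum over $\mathfrak{b}$ up to $\sqrt{X}$ grows like $X^{(1/2)(1-2/3)}=X^{1/6}$, which combines with $X^{1-2/3}=X^{1/3}$ to give $X^{1/2}$; here partial summation against the ideal-counting estimate~(\ref{E:ltt3}) introduces the extra $\log|\Delta_k|$ and $\log X$ factors recorded in the first case. The main obstacle is the bookkeeping in the $n=2$ case: one cannot simply extend the sum to infinity, so the truncated sum must be estimated by partial summation using~(\ref{E:ltt3}), and one must track how the $|\Delta_k|^{1/(n+1)}$ from the ideal count interacts with the $(N(\mathfrak{f})|\Delta_k|)^{1/(n+1)}$ from Lemma~\ref{L:second} without degrading the exponent on $|\Delta_k|$ beyond $1/3$.
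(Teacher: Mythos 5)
Your proposal is correct and follows essentially the same route as the paper: the same M\"obius identity over squares, substitution of Lemma~\ref{L:second}, assembly of the main term into $\zeta_k^{\mathfrak{m}}(2)^{-1}$ with a controlled tail, and the same three-way case split on the exponent $2\bigl(1-\tfrac{2}{n+1}\bigr)$ handled by partial summation against the uniform ideal count. The only small imprecision is in the $n=3$ case, where the harmonic-type sum is $O\bigl((\log|\Delta_k|)^2\log X\bigr)$ rather than just $O(\log X)$ uniformly in $k$ (the residue $\zeta_k^*(1)$ can be as large as $(\log|\Delta_k|)^{n-1}$), which is exactly where the $(\log|\Delta_k|)^2$ in the statement comes from; you track this correctly for $n=2$, so the same bookkeeping closes the $n=3$ case.
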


\begin{proof}
To deal with the squarefree condition, we employ the
following
\begin{align}
\label{E:sqrfree}
\sum_{\substack{\mathfrak{a} \text{ squarefree}\\N(\mathfrak{a})\leq X}}
\chi(\mathfrak{a})
&=
\sum_{\substack{N(\mathfrak{a})\leq X}}
\chi(\mathfrak{a})
\sum_{\mathfrak{d}^2\mid\mathfrak{a}}
\mu(\mathfrak{d})
\\
\nonumber
&=
\sum_{\substack{N(\mathfrak{d})\leq X^{1/2}}}
\mu(\mathfrak{d})
\sum_{\substack{N(\mathfrak{a})\leq X/N(\mathfrak{d})^2}}\chi(\mathfrak{d}^2)\chi(\mathfrak{a})
\,.
\end{align}
This identity holds for any character modulo $\mathfrak{m}$.
Hence, after applying Lemma~\ref{L:second},
the sum of interest on the lefthand side of~(\ref{E:sqrfree}) is equal to
\begin{align}
&
\label{E:another.first}
\delta(\chi)\sum_{\substack{N(\mathfrak{d})\leq X^{1/2}}}
\mu(\mathfrak{d})\chi(\mathfrak{d}^2)
\frac{\Phi(\mathfrak{m})}{N(\mathfrak{m})}\zeta_k^*(1)\frac{X}{N(\mathfrak{d})^2}
\\
&\label{E:another.second}
\qquad
+
O_n\left(
\sum_{N(\mathfrak{d})\leq X^{1/2}}
\tau(\mathfrak{m})
(N(\mathfrak{f})|\Delta_k|)^{\frac{1}{n+1}}
\left(\frac{X}{N(\mathfrak{d})^2}\right)^{1-\frac{2}{n+1}}(\log X)^{n-1}
\right)
\,.
\end{align}

It will be useful to write $g_\beta(Z):=\sum_{N(\mathfrak{d})\leq Z} N(\mathfrak{d})^\beta$
as we will need to consider this function for various values of $\beta$.
First, we have the estimate
\begin{equation}\label{E:BS}
g_0(Z)\ll_n Z(\log Z)^{n-1}
\,,
\end{equation}
which follows from Exercise~1 on page~231 of~\cite{borevich.shafarevich}.

In the case where $\chi=\chi_0$, the first summand (\ref{E:another.first}) above is
\begin{align*}
&
\frac{\Phi(\mathfrak{m})}{N(\mathfrak{m})}\zeta_k^*(1)X
\sum_{\substack{N(\mathfrak{d})\leq X^{1/2}\\(\mathfrak{d},\mathfrak{m})=1}}
\frac{\mu(\mathfrak{d})}{N(\mathfrak{d})^2}
\\
&\qquad=
\frac{\Phi(\mathfrak{m})}{N(\mathfrak{m})}\frac{\zeta_k^*(1)}{\zeta_k^\mathfrak{m}(2)}X
+O_n((\log |\Delta_k|)^{n-1} X^{1/2}(\log X)^{n-1})
\,;
\end{align*}
here we have used $\zeta^*_k(1)\ll (\log|\Delta_k|)^{n-1}$ (see, for example,~\cite{louboutin})
and
$$
\sum_{N(\mathfrak{d})>X^{1/2}}\frac{1}{N(\mathfrak{d})^{2}}\ll_n \frac{(\log X)^{n-1}}{X^{1/2}}
\,,
$$
which follows via partial summation from (\ref{E:BS}).

The second summand (\ref{E:another.second}) becomes
\begin{equation}\label{E:finale}
O_n\left(
\tau(\mathfrak{m})
(N(\mathfrak{f})|\Delta_k|)^{\frac{1}{n+1}}
X^{1-\frac{2}{n+1}}
(\log X)^{n-1}
\sum_{N(\mathfrak{d})\leq X^{1/2}}
\frac{1}{N(\mathfrak{d})^{2\left(1-\frac{2}{n+1}\right)}}
\right)
\,,
\end{equation}
and 
the last factor on the right above becomes $g_{\beta}(X^{1/2})$
for $\beta=-2+\frac{4}{n+1}$.
We now split into cases based on the value of $n$.
To complete the proof, it only remains to estimate $g_{\beta}(Z)$ appropriately in each case.

When $n\geq 4$, one has $\beta\leq -6/5$ and hence the sum converges and is bounded as
$g_\beta(Z)\leq\zeta_k(6/5)\leq\zeta(6/5)^n$.  This completes the proof in this case.

Before proceeding with the remaining cases,
we first claim that
\begin{equation}\label{E:claim}
g_0(Z)\ll_n(\log |\Delta_k|)^{n-1} Z\,,
\end{equation}
regardless
of the value of $n$.  Indeed, when $Z\geq|\Delta_k|$,
Equation (\ref{E:ltt3})
immediately gives the result upon application of
$\zeta_k^*(1)\ll (\log |\Delta_k|)^{n-1}$.
When $Z<|\Delta_k|$, we use the estimate (\ref{E:BS}).
This proves the claim.

Suppose $n=2$.
Using $g_0(Z)\ll Z\log|\Delta_k|$ and
applying
partial summation we obtain, for $-1<\beta<0$,
$g_\beta(Z)\ll Z^{\beta+1}\log |\Delta_k| $.
As $n=2$ leads to $\beta=-2/3$, this applies and we arrive at a similar 
conclusion as when $n>3$, but with the error term multiplied by a factor of
$X^{1/6}\log |\Delta_k|$.  This proves the result in the case where $n=2$.

Suppose $n=3$.
In this case, $\beta=-1$ and partial summation
yields $g_\beta(Z)\ll (\log |\Delta_k|)^2 \log Z$.  Hence we arrive at the same conclusion but with the error
term multiplied by $(\log |\Delta_k|)^2 \log X$, which proves the result in the final case.
\end{proof}

\section{Proof of Theorem~\ref{T:relative}}


We now proceed with the proof of Theorem~\ref{T:relative}.
We can see via Lemma~\ref{L:third} that
the contribution  from the principal character in (\ref{E:4}) from Lemma~\ref{L:first} is

\begin{align*}
&
2^{r_1(k)+r_2(k)}
\sum_{\substack{\mathfrak{d}\mid 2}}
\frac{1}{N(\mathfrak{d})}
\sum_{\substack{\mathfrak{c}\mid \mathfrak{d}}}
\mu\left(\frac{\mathfrak{d}}{\mathfrak{c}}\right)
\sum_{\substack{\mathfrak{a}\text{ squarefree}\\
N(\mathfrak{a})\leq \frac{N(\mathfrak{c}^2)}{4^n}X}}
\chi_0(\mathfrak{a})
\\[1ex]
&=
\frac{1}{2^{r_2(k)}}
\frac{\zeta_k^*(1)}{\zeta_k(2)}
X
+
\begin{cases}
O\left(|\Delta_k|^{1/3}\log|\Delta_k|X^{1/2}\log X\right)&n=2\,,\\
O\left(|\Delta_k|^{1/4}(\log|\Delta_k|)^2X^{1/2}(\log X)^3\right)&n=3\,,\\
O_n\left(|\Delta_k|^{\frac{1}{n+1}}X^{\errorexp}(\log X)^{n-1}\right)
&n>3\,,
\\
\end{cases}
\end{align*}
where the constant in the main term comes from
\begin{align*}
&
\frac{2^{r_1(k)+r_2(k)}}{4^n}
\left(
\sum_{\mathfrak{d}\mid 2}
\frac{\Phi(\mathfrak{d})}{N(\mathfrak{d})^2}
\prod_{\mathfrak{p}\mid\mathfrak{d}}
\left(1-N(\mathfrak{p})^{-2}\right)^{-1}
\sum_{\mathfrak{c}\mid\mathfrak{d}}
\mu\left(
\frac{\mathfrak{d}}{\mathfrak{c}}
\right)
N(\mathfrak{c})^2
\right)
\frac{\zeta_k^*(1)}{\zeta_k(2)}
\\
&
=
\frac{2^{r_1(k)+r_2(k)}}{4^n}
\left(
\sum_{\substack{\mathfrak{d}\mid 2}}
\Phi(\mathfrak{c})
\right)
\frac{\zeta_k^*(1)}{\zeta_k(2)}
\\
&=
\frac{1}{2^{r_2(k)}}
\frac{\zeta_k^*(1)}{\zeta_k(2)}
\,.
\end{align*}
Notice that in this case $\mathfrak{m}=\mathfrak{d}^2$ with $\mathfrak{d}\mid 2$
so that
$\tau(\mathfrak{m})\leq \tau(4)\ll_n 1$,
and consequently the dependence on
$\tau(\mathfrak{m})$ is absorbed into the implicit constant.

In our next estimate, it will be useful to note that the total number of characters being summed over in (\ref{E:4}) equals
$$
|\Cl_{\mathfrak{d}^2}(k)[2]|
\leq
|\Cl(k)[2]|\cdot\Phi(\mathfrak{d}^2)\leq |\Cl(k)[2]|\cdot N(\mathfrak{d})^2
\,.
$$

For simplicity, we first assume $n\geq 4$.  Again applying Lemma~\ref{L:third},
we find that the contribution in (\ref{E:4}) from the nonprincipal characters is bounded by
\begin{align}
\label{E:approp}
&\ll_n
2^{r_1(k)+r_2(k)}
\sum_{\mathfrak{d}\mid 2}
\frac{1}{N(\mathfrak{d})}
\sum_{\substack{\chi\in\widehat{\Cl_{\mathfrak{d}^2}(k)}\\\chi^2=\chi_0\\\chi\neq\chi_0 }}
\sum_{\substack{\mathfrak{c}\mid \mathfrak{d}}}
|\Delta_k|^{\frac{1}{n+1}}X^{\errorexp}(\log X)^{n-1}
\\
\label{E:approp2}
&\ll_n
|\Cl(k)[2]|
\cdot
|\Delta_k|^{\frac{1}{n+1}}X^{\errorexp}(\log X)^{n-1}
\,.
\end{align}
As before, some simplifications occur because we are allowed to drop dependence on $n$.
In particular, we have $N(\mathfrak{f})\leq N(\mathfrak{m})\leq 4^n$.
In the cases of $n=2$ and $n=3$, one simply modifies (\ref{E:approp}) and (\ref{E:approp2}) by
substituting the appropriate expression from the error term in Lemma~\ref{L:third}.

\section{Proof of Theorem~\ref{T:1}}\label{S:proof}
In this section, $k$ will always denote a quadratic field.
First note that Gauss' genus theory (see, for example~\cite{buell}) tells us that
\begin{equation}\label{E:Gauss}
|\Cl(k)[2]|\leq 2^{\omega(\Delta_k)-1}\ll |\Delta_k|^\eps
\,.
\end{equation}
In the case where $k$ is quadratic, Theorem~\ref{T:relative} gives
\begin{equation}\label{E:N1}
N_k(Y):=
\sum_{\substack{[K:k]=2\\N(\Delta_{K/k})\leq Y}}1=
\frac{1}{2^{r_2(k)}}
\frac{\zeta_k^*(1)}{\zeta_k(2)}Y
+
O(|\Delta_k|^{1/3+\eps}Y^{1/2}\log Y)
\,.
\end{equation}
We also have the weaker estimate
\begin{equation}\label{E:N2}
N_k(Y)\leq\sum_{\mathfrak{a}\in A}\sum_{\substack{\overline{u}\in S(k)\\N(\mathfrak{a})\leq Y}}1\leq |S(k)|
\sum_{\substack{N(\mathfrak{a})\leq Y}}
1
\ll
2^{\omega(\Delta_k)}Y\log |\Delta_k| 
\,,
\end{equation}
which one can derive from (\ref{E:0}), Lemma 3.2 of~\cite{cohen.diaz.olivier}, and (\ref{E:claim}).
We write
\begin{align}
\label{E:err.def}
&N_k(Y)=\frac{1}{2^{r_2(k)}}
\frac{\zeta_k^*(1)}{\zeta_k(2)}Y
+E_k(Y)\,,
\end{align}
and the left-hand side of (\ref{E:fields}) becomes
\begin{align*}
&
\sum_{\substack{[k:\Q]=2\\|\Delta_{k}|\leq \sqrt{X}}}
N_k(X/\Delta_k^2)
=
X
\sum_{\substack{[k:\Q]=2\\|\Delta_{k}|\leq \sqrt{X}}}
\frac{1}{2^{r_2}\Delta_k^2}
\frac{\zeta_k^*(1)}{\zeta_k(2)}
+
\sum_{\substack{[k:\Q]=2\\|\Delta_{k}|\leq \sqrt{X}}}
E_k(X/\Delta_k^2)
\,.
\end{align*}
For $1\leq Z\leq\sqrt{X}$, we have
\begin{align*}
\sum_{\substack{[k:\Q]=2\\|\Delta_{k}|\leq \sqrt{X}}}
E_k(X/\Delta_k^2)
&\ll
\sum_{\substack{[k:\Q]=2\\|\Delta_{k}|\leq Z}}
|\Delta_k|^{1/3+\eps}
\left(\frac{X}{\Delta_k^2}\right)^{1/2+\eps}
+
\sum_{\substack{[k:\Q]=2\\Z<|\Delta_{k}|\leq \sqrt{X}}}
|\Delta_k|^\eps\left(
\frac{X}{\Delta_k^2}\right) 
\\
&
\ll
X^{1/2+\eps}Z^{1/3-\eps}+X Z^{-1+\eps}
\,.
\end{align*}
Choosing $Z=X^{3/8}$, we obtain
\begin{equation}\label{E:done}
\sum_{\substack{[k:\Q]=2\\
|\Delta_{k}|\leq \sqrt{X}}}
\;\;
\sum_{\substack{[K:k]=2\\N(\Delta_{K/k})\leq X/\Delta_k^2}}1
=
X
\sum_{\substack{[k:\Q]=2}}
\frac{1}{2^{r_2}\Delta_k^2}
\frac{\zeta_k^*(1)}{\zeta_k(2)}
+O(X^{5/8+\eps})
\,.
\end{equation}
Here we have used
$$
\sum_{\substack{[k:\Q]=2\\|\Delta_{k}|> \sqrt{X}}}
\frac{1}{2^{r_2}\Delta_k^2}
\frac{\zeta_k^*(1)}{\zeta_k(2)}
\ll
\sum_{\substack{[k:\Q]=2\\|\Delta_{k}|> \sqrt{X}}}
\frac{\log|\Delta_k|}{\Delta_k^2}
\ll
X^{-1/2+\eps}
\,.
$$
Equations~(\ref{E:fields}) and~(\ref{E:done}) establish Theorem~\ref{T:1}.

\begin{remark}\label{R:1}
Note that an improvement of the error term in (\ref{E:done}) to $O(X^{1/2+\eps})$ would
prove the conjecture mentioned in~\S\ref{S:intro}.  Moreover, an improvement to $O(X^{1/2}\log X)$,
if possible, would instantly extract the secondary term as
$$
  N_4(D_4,X)=CX-\frac{3}{2}DX^{1/2}(\log X)^2+O(X^{1/2}\log X)
  \,,
$$
where the constant $D$ comes from the count $N_4(V_4, X)\sim DX^{1/2}(\log X)^2$.
\end{remark}

\section{Proof of Theorem~\ref{T:3}}
An easy variation on the discussion in \S\ref{S:initial}
allows us to see that in order to count $D_4$-extensions,
as before, it suffices to study extensions $K/k/F$
that are quadratic over quadratic.
For bookkeeping purposes, note that $[k:\Q]=2[F:\Q]=2n$.
First, we invoke
Theorem~2.1~of~\cite{MR4428868} to bound the $2$-torsion
of the class group of $k$ as
\begin{equation}\label{E:genus.new}
|\Cl(k)[2]|\ll_n |\Cl(F)[2]|^2|\Delta_k|^\eps\,.
\end{equation}

We adopt the notation from the proof of Theorem~\ref{T:1}.
The number of quadratic over quadratic extensions $K/k/F$ equals
\begin{align}
&
\label{E:once.again}
\sum_{\substack{[k:F]=2\\N_{F/\Q}(\Delta_{k/F})\leq\sqrt{X}}}
N_k\left(\frac{X}{N_{F/\Q}(\Delta_{k/F})^2}\right)
\\
\label{E:const.trunc}
&\qquad=
\sum_{\substack{[k:F]=2\\N_{F/\Q}(\Delta_{k/F})\leq\sqrt{X}}}
\frac{1}{2^{r_2(k)}}\frac{\zeta^*_k(1)}{\zeta_k(2)}\frac{X}{N_{F/\Q}(\Delta_{k/F})^2}
\\
\label{E:error.part}
&\qquad\qquad
+
\sum_{\substack{[k:F]=2\\N_{F/\Q}(\Delta_{k/F})\leq\sqrt{X}}}
E_k\left(\frac{X}{N_{F/\Q}(\Delta_{k/F})^2}\right)
\,.
\end{align}

Since $[k:\Q]=2n\geq 4$, Theorem~\ref{T:relative} combined with
(\ref{E:genus.new})
implies
\begin{equation}\label{E:analogue1}
  N_k(Y)=\frac{1}{2^{r_2(k)}}\frac{\zeta^*_k(1)}{\zeta_k(2)}Y+
  O_n(|\Cl(F)[2]|^2|\Delta_k|^{\frac{1}{2n+1}+\eps} Y^{1-\frac{2}{2n+1}+\eps})
  \,,
\end{equation}
which is the analogue of (\ref{E:N1}) in this context.
Additionally, we find
\begin{equation}\label{E:latter}
  N_F(Y)\ll_n |\Cl(F)[2]|\cdot |\Delta_F|^\eps Y 
  \,,
\end{equation}
which is the analogue of (\ref{E:N2}).
Using (\ref{E:analogue1}) we see that (\ref{E:error.part}) is
\begin{equation}\label{E:key}
  \ll_n
  |\Cl(F)[2]|^2
  |\Delta_F|^{\frac{2}{2n+1}+\eps}
  X^{1-\frac{2}{2n+1}+\eps}
  \hspace{-3ex}
\sum_{\substack{[k:F]=2\\N_{F/\Q}(\Delta_{k/F})\leq\sqrt{X}}}
\hspace{-3ex}
N_{F/\Q}(\Delta_{k/F})^{-2+\frac{5}{2n+1}-\eps}
\,.
\end{equation}
Here we have used the identity $|\Delta_k|=|\Delta_F|^2N_{F/\Q}(\Delta_{k/F})$.
Using partial summation on (\ref{E:latter}) one finds,
for fixed $\beta<-1$,
\begin{equation}\label{E:partial}
\sum_{\substack{[k:F]=2\\N_{F/\Q}(\Delta_{k/F})\leq Y}}
N_{F/Q}(\Delta_{k/F})^{\beta}
\ll_n
|\Cl(F)[2]|\cdot
|\Delta_F|^{\eps}
\,.
\end{equation}
Applying (\ref{E:partial}) to 
(\ref{E:key}) now gives the result.

\begin{remark}
If one prefers a version of Theorem~\ref{T:3} in which the dependence on $F$
is expressed purely in terms of the discriminant, one simply bounds
$|\Cl(F)[2]|$ in terms of $|\Delta_F|$.  Indeed,
$|\Cl(F)[2]|\ll_n |\Delta_F|^{\alpha(n)+\eps}$,
where the Brauer--Siegel Theorem allows one to take $\alpha(n)=1/2$.
More recently, it was shown (see~\cite{BSTTTZ}) that one can take
$\alpha(n)=1/2-1/(2n)$ when $n\geq 5$ and $\alpha(3)=\alpha(4)=0.2785$.
Of course, from (\ref{E:Gauss}) one can take $\alpha(2)=0$.
As an example, when $n=3$, one obtains an error of $O(|\Delta_F|^{1.13}X^{5/7+\eps})$.
\end{remark}

\vspace{2ex}
\noindent {\bf Acknowledgements.}
The authors would like to thank Frank Thorne for his helpful comments.
The authors are indebted to the anonymous referee for numerous suggestions
that greatly improved the quality of this paper.
The bulk of this work was completed
at the Mathematical Sciences Research Institute / Simons Laufer Mathematical Sciences Institute in Berkeley, CA during Spring 2023.  We would like to thank MSRI/SL Math for providing an excellent work environment and the National Science Foundation (under grant 1440140) for supporting MSRI.

\bibliographystyle{alpha}
\bibliography{counting2}

\newcommand{\etalchar}[1]{$^{#1}$}
\begin{thebibliography}{CDyDO06}

\bibitem[ASVW21]{altug.ali.shankar.varma.wilson}
S.~Ali Altu\u{g}, Arul Shankar, Ila Varma, and Kevin~H. Wilson.
\newblock The number of {$D_4$}-fields ordered by conductor.
\newblock {\em J. Eur. Math. Soc. (JEMS)}, 23(8):2733--2785, 2021.

\bibitem[Bai80]{baily}
Andrew~Marc Baily.
\newblock On the density of discriminants of quartic fields.
\newblock {\em J. Reine Angew. Math.}, 315:190--210, 1980.

\bibitem[BFSLV22]{BFSLV22}
Alina Bucur, Alexandra Florea, Allechar Serrano~L\'opez, and Ila Varma.
\newblock Power-saving error terms for the number of {$D_4$}-quartic extensions
  over a number field ordered by discriminant.
\newblock 2022.

\bibitem[Bha07]{bhargava.mass}
Manjul Bhargava.
\newblock Mass formulae for extensions of local fields, and conjectures on the
  density of number field discriminants.
\newblock {\em Int. Math. Res. Not. IMRN}, (17):Art. ID rnm052, 20, 2007.

\bibitem[BS66]{borevich.shafarevich}
A.~I. Borevich and I.~R. Shafarevich.
\newblock {\em Number theory.}
\newblock Academic Press, New York-London,, 1966.
\newblock Translated from the Russian by Newcomb Greenleaf.

\bibitem[BSMT23]{barquero.masri.thorne}
Adrian Barquero-Sanchez, Riad Masri, and Frank Thorne.
\newblock The distribution of {$G$}-{W}eyl {CM} fields and the {C}olmez
  conjecture.
\newblock {\em Res. Math. Sci.}, 10(2):Paper No. 25, 25, 2023.

\bibitem[BST{\etalchar{+}}20]{BSTTTZ}
M.~Bhargava, A.~Shankar, T.~Taniguchi, F.~Thorne, J.~Tsimerman, and Y.~Zhao.
\newblock Bounds on 2-torsion in class groups of number fields and integral
  points on elliptic curves.
\newblock {\em J. Amer. Math. Soc.}, 33(4):1087--1099, 2020.

\bibitem[Bue89]{buell}
Duncan~A. Buell.
\newblock {\em Binary quadratic forms}.
\newblock Springer-Verlag, New York, 1989.
\newblock Classical theory and modern computations.

\bibitem[CDyDO02]{cohen.diaz.olivier}
Henri Cohen, Francisco Diaz~y Diaz, and Michel Olivier.
\newblock Enumerating quartic dihedral extensions of {$\Bbb Q$}.
\newblock {\em Compositio Math.}, 133(1):65--93, 2002.

\bibitem[CDyDO06]{cohen.diaz.olivier2}
Henri Cohen, Francisco Diaz~y Diaz, and Michel Olivier.
\newblock Counting discriminants of number fields.
\newblock {\em Journal de theorie des nombres de bordeaux}, 18(3):573--593,
  2006.

\bibitem[CL84]{cohen.lenstra}
Henri Cohen and Hendrik~W. Lenstra, Jr.
\newblock Heuristics on class groups of number fields.
\newblock In {\em Number theory, {N}oordwijkerhout 1983 ({N}oordwijkerhout,
  1983)}, volume 1068 of {\em Lecture Notes in Math.}, pages 33--62. Springer,
  Berlin, 1984.

\bibitem[DW88]{wright}
Boris Datskovsky and David~J. Wright.
\newblock Density of discriminants of cubic extensions.
\newblock {\em J. Reine Angew. Math.}, 386:116--138, 1988.

\bibitem[Fri21]{Friedrichsen}
Matthew Friedrichsen.
\newblock A secondary term of $d_4$ quartic fields ordered by conductor.
\newblock 2021.

\bibitem[Klu05]{kluners}
Jurgen Kluners.
\newblock A counterexample to {M}alle's conjecture on the asymptotics of
  discriminants.
\newblock {\em C. R. Math. Acad. Sci. Paris}, 340(6):411--414, 2005.

\bibitem[KW22]{MR4428868}
J\"{u}rgen Kl\"{u}ners and Jiuya Wang.
\newblock {$\ell$}-torsion bounds for the class group of number fields with an
  {$\ell$}-group as {G}alois group.
\newblock {\em Proc. Amer. Math. Soc.}, 150(7):2793--2805, 2022.

\bibitem[Lan18]{landau2}
Edmund Landau.
\newblock Uber ideale und primideale in idealklassen.
\newblock {\em Mathematische Zeitschrift}, 2(1-2):52--154, 1918.

\bibitem[LDTT22]{lowry-duda.taniguchi.thorne}
David Lowry-Duda, Takashi Taniguchi, and Frank Thorne.
\newblock Uniform bounds for lattice point counting and partial sums of zeta
  functions.
\newblock {\em Math. Z.}, 300(3):2571--2590, 2022.

\bibitem[Lou01]{louboutin}
Stéphane Louboutin.
\newblock Explicit upper bounds for residues of dedekind zeta functions and
  values of l-functions at s = 1, and explicit lower bounds for relative class
  numbers of cm-fields.
\newblock {\em Canadian journal of mathematics}, 53(6):1194--1222, 2001.

\bibitem[M{\"{a}}k93]{Maki}
Sirpa M{\"{a}}ki.
\newblock The conductor density of abelian number fields.
\newblock {\em J. London Math. Soc. (2)}, 47(1):18--30, 1993.

\bibitem[Mal04]{malle2}
Gunter Malle.
\newblock On the distribution of {G}alois groups. {II}.
\newblock {\em Experiment. Math.}, 13(2):129--135, 2004.

\bibitem[Mal10]{malle1}
Gunter Malle.
\newblock On the distribution of class groups of number fields.
\newblock {\em Experiment. Math.}, 19(4):465--474, 2010.

\bibitem[Neu99]{neukirch}
J\"{u}rgen Neukirch.
\newblock {\em Algebraic number theory}, volume 322 of {\em Grundlehren der
  mathematischen Wissenschaften [Fundamental Principles of Mathematical
  Sciences]}.
\newblock Springer-Verlag, Berlin, 1999.
\newblock Translated from the 1992 German original and with a note by Norbert
  Schappacher, With a foreword by G. Harder.

\end{thebibliography}


\pagebreak

\footnotesize{
\noindent
Kevin J. McGown\\
Department of Mathematics and Statistics\\
California State University, Chico\\
Chico, California 95929\\
U.S.A.\\
\emph{E-mail address:} {\tt kmcgown@csuchico.edu}\\[1ex]

\noindent
Amanda Tucker\\
Department of Mathematics\\
University of Rochester\\
Rochester, New York 14627\\
U.S.A.\\
\emph{E-mail address:} {\tt amanda.tucker@rochester.edu}
}

\end{document}